\documentclass[11pt]{article}

\usepackage[T1]{fontenc}
\usepackage{lmodern}
\usepackage{microtype}
\usepackage{amsmath,amssymb,amsthm,mathtools}
\usepackage{booktabs}
\usepackage{enumitem}
\usepackage[hidelinks]{hyperref}
\hypersetup{
  pdftitle={Median--Extremes Alternation: A Parity-Unified Maximal-Contrast Traversal of Finite Ordered Sets},
  pdfauthor={David Carr},
  pdfsubject={A uniqueness theorem for Median--Extremes Alternation on finite ordered sets},
  pdfkeywords={finite ordered sets, permutations, greedy traversal, bilateral symmetry, maximal contrast, orientation}
}

\newtheorem{theorem}{Theorem}[section]
\newtheorem{proposition}[theorem]{Proposition}
\newtheorem{corollary}[theorem]{Corollary}
\theoremstyle{definition}
\newtheorem{definition}[theorem]{Definition}
\newtheorem{example}[theorem]{Example}
\theoremstyle{remark}

\newcommand{\MEA}{\operatorname{MEA}}
\newcommand{\ord}{\operatorname{ord}}

\title{Median--Extremes Alternation:\\
A Parity-Unified Maximal-Contrast Traversal of Finite Ordered Sets}
\author{David Carr}
\date{July 2026}

\begin{document}

\maketitle

\begin{abstract}
Median--Extremes Alternation (MEA) is a deterministic traversal of a finite linearly ordered set. The positions are folded about their midpoint into bilateral shells, and each shell is assigned an integer-valued normalized radius. Beginning with the central shell, the traversal repeatedly selects the unvisited shell having greatest radial contrast with the shell most recently engaged. We prove that this local rule has a strict unique maximizer at every step and forces the shell order
\[
0,\ q,\ 1,\ q-1,\ 2,\ q-2,\ldots,
\]
for both odd and even cardinalities. Parity affects only whether the central shell is a singleton or a pair. A separately supplied global orientation determines the order of the elements within every shell, yielding exactly two mirror traversals for $n\geq 2$ before orientation is fixed and exactly one afterward. Explicit formulas, an $O(n)$ generation algorithm, examples, and scope conditions are given. The result replaces an earlier fixed-center distance-minimization formulation, which does not generate the canonical MEA traversal.
\end{abstract}

\noindent\textbf{Keywords.} finite ordered sets; permutations; greedy traversal; bilateral symmetry; maximal contrast; orientation

\section{Introduction}

Let $[n]=\{1,2,\ldots,n\}$ carry its usual linear order. Median--Extremes Alternation (MEA) traverses $[n]$ by first engaging its median position or median pair, then its exterior pair, and then alternating between the inner and outer boundaries of the unresolved positions. For example,
\[
\MEA_{7,+}=(4,1,7,3,5,2,6)
\]
and
\[
\MEA_{10,+}=(5,6,1,10,4,7,2,9,3,8).
\]
The subscript $+$ records the orientation used inside every two-element block; orientation is treated separately in Section~\ref{sec:orientation}.

The purpose of this paper is to characterize the shell order by an independent local invariant rather than merely restating the traversal. The correct invariant is \emph{maximal radial contrast from the shell most recently engaged}. It is not minimum distance from the fixed center, and it is not maximum distance from the most recently visited raw index. The optimization is performed on bilateral shells treated as aggregate objects.

A preliminary formulation characterized MEA by nondecreasing distance from the fixed center and asserted distinct odd and even branches. That formulation is incorrect. Canonical MEA jumps from the center to the exterior shell, so its fixed-center distances are not nondecreasing. The definitions and theorem below replace that formulation with one rule applying uniformly across parity.

The main result is elementary but exact: once the center is fixed, greedy maximal radial contrast forces a unique shell sequence. A global orientation parameter then converts the shell sequence into a unique traversal of the underlying ordered set.

Distance-based traversal problems appear in several established forms. Gonzalez's farthest-first method selects a point by its distance from the set already chosen in metric clustering~\cite{gonzalez1985}. Maximum-scatter Hamiltonian paths and tours instead optimize separation between consecutive points~\cite{arkin1999}; line-based constructions from that literature also underlie later grid algorithms~\cite{hoffmann2017}. MEA is neither objective: it first groups the ordered domain into bilateral shells, fixes the central shell as the initial condition, and greedily maximizes radius difference only from the shell most recently engaged. These references locate the construction among nearby optimization ideas rather than supplying premises for the theorem.

\section{Bilateral shells and normalized radius}

Fix $n\geq 1$ and write
\[
a=\left\lfloor\frac{n+1}{2}\right\rfloor,
\qquad
b=\left\lceil\frac{n+1}{2}\right\rceil,
\qquad
q=\left\lfloor\frac{n-1}{2}\right\rfloor.
\]
Thus $a=b$ when $n$ is odd and $b=a+1$ when $n$ is even.

\begin{definition}[Bilateral shells]
For each $r\in\{0,1,\ldots,q\}$, define
\[
B_r=\{a-r,\ b+r\}.
\]
When $n$ is odd, $B_0=\{a\}$ is a singleton; otherwise every $B_r$, including $B_0$, has two elements. The family
\[
\mathcal{B}_n=\{B_0,B_1,\ldots,B_q\}
\]
is called the \emph{bilateral shell partition} of $[n]$.
\end{definition}

The shells are the orbits of reflection about the midpoint, grouped by distance from the center. They partition $[n]$.

\begin{definition}[Normalized shell radius]
The normalized radius of $B_r$ is
\[
\rho(B_r)=r.
\]
Equivalently, if $d_r$ denotes the ordinary geometric distance of the elements of $B_r$ from the midpoint $(n+1)/2$, then
\[
d_r=r+\delta_n,
\qquad
\delta_n=
\begin{cases}
0, & n\text{ odd},\\[2pt]
\frac12, & n\text{ even},
\end{cases}
\]
and $\rho(B_r)=d_r-\delta_n$.
\end{definition}

The normalization simply translates all even-case radii by $-1/2$. Consequently,
\[
|\rho(B_r)-\rho(B_s)|=|d_r-d_s|,
\]
so no radial contrast is altered. The sole purpose of normalization is to assign radius $0$ to the central block in both parities.

\begin{definition}[Shell traversal]
A shell traversal is a permutation
\[
(r_0,r_1,\ldots,r_q)
\]
of $\{0,1,\ldots,q\}$. It represents the block sequence
\[
(B_{r_0},B_{r_1},\ldots,B_{r_q}).
\]
\end{definition}

\section{The maximal-radial-contrast theorem}

\begin{definition}[Maximal radial contrast]
A shell traversal $(r_0,r_1,\ldots,r_q)$ satisfies the \emph{maximal-radial-contrast rule} when
\begin{enumerate}[label=\textup{(\roman*)}]
    \item $r_0=0$;
    \item for each $k\geq 1$, the next radius maximizes contrast with the radius most recently visited:
    \[
    r_k\in\underset{r\notin\{r_0,\ldots,r_{k-1}\}}{\arg\max}
    |r-r_{k-1}|.
    \]
\end{enumerate}
\end{definition}

The rule refers to the previous \emph{shell radius}, not to the previous raw position in $[n]$.

\begin{theorem}[Maximal-contrast uniqueness]\label{thm:main}
For every $n\geq 1$, there is exactly one shell traversal satisfying the maximal-radial-contrast rule. It is
\[
0,\ q,\ 1,\ q-1,\ 2,\ q-2,\ldots.
\]
More explicitly, $r_0=0$ and, for $1\leq k\leq q$,
\[
r_k=
\begin{cases}
q-\dfrac{k-1}{2}, & k\text{ odd},\\[8pt]
\dfrac{k}{2}, & k\text{ even}.
\end{cases}
\]
At every nonterminal step the maximizing shell is unique.
\end{theorem}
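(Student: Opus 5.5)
We argue by induction on $k$, using the invariant that the unvisited set is always a contiguous interval of integers. Define
\[
U_k=\{0,\ldots,q\}\setminus\{r_0,\ldots,r_{k-1}\}
\]
as the set of radii still available at step $k$. The claim to prove inductively is the following. If $k$ is odd, then $U_k=\{\tfrac{k-1}{2}+1,\ldots,q-\tfrac{k-1}{2}\}$ and $r_{k-1}=\tfrac{k-1}{2}$, so the last visited radius sits just below the left end of the interval. If $k$ is even, then $U_k=\{\tfrac{k}{2},\ldots,q-\tfrac{k}{2}\}$ and $r_{k-1}=q-\tfrac{k-2}{2}$, so the last visited radius sits just above the right end. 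The base case $k=1$ holds because $r_0=0$ and $U_1=\{1,\ldots,q\}$.

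For the inductive step, observe that if the previous radius $p$ lies strictly outside an interval $[L,R]$, then $r\mapsto |r-p|$ is strictly monotone on that interval. When $p<L$ it is strictly increasing, so $R$ is the unique maximizer. When $p>R$ it is strictly decreasing, so $L$ is the unique maximizer.

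In the odd case, $p=\tfrac{k-1}{2}<L$, so the rule forces $r_k=q-\tfrac{k-1}{2}$. Removing $r_k$ leaves $U_{k+1}=\{\tfrac{k+1}{2},\ldots,q-\tfrac{k-1}{2}-1\}$ with $r_k$ just above its right end, which is the even-case form for index $k+1$.

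In the even case, $p=q-\tfrac{k-2}{2}>R$, so the rule forces $r_k=\tfrac{k}{2}$. This leaves $U_{k+1}=\{\tfrac{k}{2}+1,\ldots,q-\tfrac{k}{2}\}$ with $r_k$ just below its left end, which is the odd-case form for index $k+1$.

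Together these give both the closed formula for $r_k$ and the uniqueness of the choice at each step. The uniqueness statement is vacuous when $|U_k|=1$, and strict monotonicity handles every larger interval. Since the rule determines each $r_k$ uniquely from $r_0,\ldots,r_{k-1}$, at most one traversal satisfies it. The sequence constructed above does satisfy it, so exactly one does. Degenerate cases need no separate treatment: for $q=0$ (that is, $n\le 2$) the traversal is just $(0)$, and for $q=1$ it is $(0,1)$. Parity of $n$ enters only through the value of $q$, so the argument applies uniformly.

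The main obstacle is bookkeeping. We must confirm that the interval $U_k$ is nonempty for every $k\le q$, and that it stays consistent at the final step, where the left and right ends meet. Checking $|U_k|=q+1-k\ge 1$ directly from the endpoint formulas settles this.
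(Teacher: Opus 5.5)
Your proposal is correct and follows essentially the same route as the paper: both maintain the invariant that the unvisited radii form a contiguous interval with the last-visited radius lying just outside one end, so the opposite endpoint is the strict unique maximizer. Your explicit endpoint formulas for $U_k$ also verify the closed form for $r_k$ directly, which the paper states but argues only through the pattern $q,1,q-1,2,\ldots$.
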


\begin{proof}
The case $q=0$ is immediate. Assume $q\geq 1$.

The traversal starts at radius $0$. Among the unvisited radii $\{1,\ldots,q\}$, the unique maximizer of $|r-0|$ is $q$. Thus the first two radii are $0,q$.

After radius $q$ is visited, the unvisited radii form the contiguous interval $[1,q-1]$, and the current radius $q$ lies immediately above it. The unique radius in that interval farthest from $q$ is its lower endpoint $1$. After visiting $1$, the remaining radii form $[2,q-1]$, and the current radius lies immediately below that interval. The unique farthest radius is now its upper endpoint $q-1$.

The same structure persists inductively. At any nonterminal stage, suppose the unvisited radii form a nonempty contiguous interval $[L,U]$ and the current radius is adjacent to it.

If the current radius is $U+1$, then for every $r\in[L,U]$,
\[
|(U+1)-L|\geq |(U+1)-r|,
\]
with equality only when $r=L$. Hence $L$ is the unique maximizer. Removing $L$ leaves the interval $[L+1,U]$, and the new current radius $L$ lies immediately below it.

If the current radius is $L-1$, then for every $r\in[L,U]$,
\[
|U-(L-1)|\geq |r-(L-1)|,
\]
with equality only when $r=U$. Hence $U$ is the unique maximizer. Removing $U$ leaves the interval $[L,U-1]$, and the new current radius $U$ lies immediately above it.

Thus contiguity of the remainder and boundary adjacency of the current radius are preserved. The traversal is therefore forced to remove alternately the upper and lower endpoints of the remaining interval:
\[
q,1,q-1,2,q-2,3,\ldots.
\]
Including the initial center radius gives the stated sequence. Since the maximizer is strict at every nonterminal step, no alternative shell traversal satisfies the rule.
\end{proof}

\begin{corollary}[Descending contrast profile]\label{cor:contrast}
For the traversal in Theorem~\ref{thm:main},
\[
|r_k-r_{k-1}|=q-k+1,
\qquad 1\leq k\leq q.
\]
Hence the successive radial contrasts are
\[
q,q-1,q-2,\ldots,1.
\]
\end{corollary}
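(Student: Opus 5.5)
The plan is to substitute the explicit formula for $r_k$ from Theorem~\ref{thm:main} into $|r_k-r_{k-1}|$ and split by the parity of $k$. The case $k=1$ is immediate, since $r_0=0$ and $r_1=q$, so $|r_1-r_0|=q=q-1+1$. For $k\geq 2$ both indices $k$ and $k-1$ lie in the range $1\leq\cdot\leq q$, so the closed form applies to both terms.

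For $k$ even with $k\geq 2$, we have $r_k=k/2$, and $k-1$ is odd, so $r_{k-1}=q-(k-2)/2$. Then $r_{k-1}-r_k=q-(k-2)/2-k/2=q-k+1$. This is nonnegative because $k\leq q$, so the absolute value equals $q-k+1$.

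For $k$ odd with $k\geq 3$, we have $r_k=q-(k-1)/2$, and $k-1$ is even, so $r_{k-1}=(k-1)/2$. Then $r_k-r_{k-1}=q-(k-1)=q-k+1\geq 1$, and again the absolute value equals $q-k+1$. Listing these values for $k=1,\dots,q$ gives the profile $q,q-1,\dots,1$.

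As an alternative that avoids the arithmetic, one could use the interval picture from the proof of Theorem~\ref{thm:main}. Before step $k\geq 2$, the unvisited radii form an interval $[L,U]$ with $U-L+1=q-k+1$ elements, and the current radius is adjacent to one end. The move goes to the far endpoint, which has distance exactly $U-L+1$. Adding step $1$, of size $q$, gives the same profile.

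The only step needing care is the sign check, namely confirming that $q-k+1\geq 1$ in the range $k\leq q$ so that the absolute values resolve as claimed. This is routine, so there is no substantive obstacle.
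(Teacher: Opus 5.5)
Your proposal is correct and matches the paper, which gives no separate proof of this corollary and treats it as immediate from the explicit formula for $r_k$ in Theorem~\ref{thm:main} (equivalently, from the shrinking-interval invariant in its proof); your parity-split substitution and your count $U-L+1=q-k+1$ are exactly those two readings. As a minor aside, the interval picture already covers $k=1$ (the unvisited interval is $[1,q]$ and the current radius $0=L-1$ is adjacent to it), so handling the first step separately is unnecessary but harmless.
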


\begin{corollary}[Alternating endpoint elimination]
After the center is engaged, MEA is equivalently generated by repeatedly removing the upper endpoint and then the lower endpoint of the remaining interval of shell radii:
\[
[1,q]\longmapsto q,1,q-1,2,q-2,3,\ldots.
\]
This endpoint alternation is a consequence of maximal radial contrast, not an additional selection rule.
\end{corollary}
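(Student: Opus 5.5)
The statement to prove is the Alternating endpoint elimination corollary. I will read it off the proof of Theorem~\ref{thm:main}, using the invariant established there.

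First, the identification of MEA with the maximal-radial-contrast traversal is the content of Theorem~\ref{thm:main}. That theorem says the unique traversal satisfying the rule is $0,q,1,q-1,2,\ldots$. So after $r_0=0$, the remaining radii $\{1,\ldots,q\}$ form the interval $[L,U]=[1,q]$. The current radius $0=L-1$ lies immediately below it.

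The key step is to restate the inductive invariant from the proof. At each nonterminal stage the unvisited radii form a contiguous interval $[L,U]$, and the current radius is adjacent to it, at either $L-1$ or $U+1$. The two cases of the proof then apply. From $L-1$ the unique maximizer is the upper endpoint $U$, and the current radius becomes $U$, which equals $(U-1)+1$ and so lies just above $[L,U-1]$. From $U+1$ the unique maximizer is the lower endpoint $L$, and the current radius then lies just below $[L+1,U]$. By induction on $k$, the side of adjacency flips at every step. Hence the removals alternate upper, lower, upper, \ldots, starting with upper because the initial adjacency is from below. As a consistency check, this agrees with the explicit formula for $r_k$: odd $k$ gives the upper endpoint $q-(k-1)/2$ and even $k$ gives the lower endpoint $k/2$.

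Finally, for the ``consequence, not additional rule'' clause I will argue by uniqueness. Theorem~\ref{thm:main} shows the maximizer is strict at every nonterminal step. So the endpoint-alternation sequence is exactly the unique maximal-contrast traversal, and no separate choice is ever imposed.

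The only delicate point is the terminal step, when $L=U$ and the upper and lower endpoints coincide. Here the alternation label is harmless because the interval has a single element. I would remark on this explicitly, so that the case where $q$ is odd does not look like a violation of the pattern.
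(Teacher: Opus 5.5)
Your proposal is correct and is essentially the paper's own argument. The corollary is read off the proof of Theorem~\ref{thm:main}, whose invariant (the unvisited radii form a contiguous interval $[L,U]$ with the current radius at $L-1$ or $U+1$) flips sides at each step and so forces the upper/lower alternation, while strictness of the maximizer shows the alternation is forced rather than imposed. One small clarification: the last step removes a singleton interval for every $q\geq 1$, so what is special about odd $q$ is only that this final removal is an unpaired ``upper'' step rather than the end of a completed upper--lower pair.
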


\section{Orientation and the index traversal}\label{sec:orientation}

Theorem~\ref{thm:main} determines which shell is visited next. It does not determine which element of a two-element shell is visited first. That second choice is governed by a single global orientation parameter supplied by the application domain.

\begin{definition}[Global orientation]
Let $\omega\in\{+,-\}$. For a shell $B_r=\{a-r,b+r\}$, define its oriented expansion by
\[
\ord_{+}(B_r)=(a-r,b+r),
\qquad
\ord_{-}(B_r)=(b+r,a-r),
\]
with duplicate entries removed when $B_0$ is a singleton.
\end{definition}

Thus $+$ means low index before high index in every block, while $-$ means high index before low index in every block. The same $\omega$ applies uniformly to the center pair and every bilateral shell. Orientation is relational to the operative order of the domain; the theorem does not declare one direction intrinsically prior to the other.

\begin{definition}[Oriented MEA traversal]
Let $(r_0,\ldots,r_q)$ be the unique shell sequence from Theorem~\ref{thm:main}. Define
\[
\MEA_{n,\omega}
=
\ord_{\omega}(B_{r_0})
\mathbin{\Vert}
\ord_{\omega}(B_{r_1})
\mathbin{\Vert}\cdots\mathbin{\Vert}
\ord_{\omega}(B_{r_q}),
\]
where $\Vert$ denotes concatenation.
\end{definition}

\begin{corollary}[Oriented uniqueness]\label{cor:oriented}
For fixed $n$ and fixed $\omega$, $\MEA_{n,\omega}$ is the unique traversal of $[n]$ satisfying all of the following:
\begin{enumerate}[label=\textup{(\roman*)}]
    \item the central block is visited first;
    \item every bilateral shell is visited as one contiguous aggregate block;
    \item shell selection obeys maximal normalized radial contrast from the shell most recently engaged;
    \item every two-element block is traversed consistently according to the same global orientation $\omega$.
\end{enumerate}
\end{corollary}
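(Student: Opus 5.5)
The plan is to establish existence and uniqueness separately, reducing the question in each case to the shell-level statement of Theorem~\ref{thm:main}.

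\emph{Existence.} Condition (i) holds because $r_0=0$, so $\ord_\omega(B_0)$ comes first. Condition (ii) holds because $\MEA_{n,\omega}$ is built by concatenating the oriented expansions of the shells. Condition (iii) holds because the shell sequence $(r_0,\ldots,r_q)$ is exactly the one produced in Theorem~\ref{thm:main}. Condition (iv) holds because the same $\omega$ is applied to every block. We also need $\MEA_{n,\omega}$ to be a traversal of $[n]$. This follows because the shells $B_0,\ldots,B_q$ partition $[n]$: for $n$ odd, $B_0=\{a\}$ and the pairs $\{a-r,a+r\}$ cover the rest, with $2q+1=n$; for $n$ even, $2(q+1)=n$. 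Since each shell appears exactly once in the concatenation, every element of $[n]$ appears exactly once.

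\emph{Uniqueness.} Let $\pi$ be any traversal of $[n]$ satisfying (i)--(iv).
\begin{enumerate}[label=\textup{(\alph*)}]
\item By (ii), $\pi$ splits uniquely into consecutive segments, each of which is the full set of some shell. This uses that shells are disjoint and cover $[n]$, so the segment boundaries are determined by membership.
\item Reading off the shell radii of these segments in order gives a permutation $(s_0,\ldots,s_q)$ of $\{0,\ldots,q\}$.
\item By (i), $s_0=0$. By (iii), each $s_k$ maximizes $|s-s_{k-1}|$ over the unvisited radii.
\item By the uniqueness assertion of Theorem~\ref{thm:main}, $s_k=r_k$ for all $k$.
\item Within each segment, a singleton has only one ordering. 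A two-element shell $\{a-r,b+r\}$ has exactly two orderings, and (iv) forces the one given by $\ord_\omega$.
\end{enumerate}
Hence $\pi=\MEA_{n,\omega}$.

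The only step needing care is (a): making precise that "visited as one contiguous aggregate block" determines a well-defined block decomposition of $\pi$. I expect this to be the main obstacle only as a matter of interpretation. The plan is to read (ii) as saying that the elements of each shell occupy consecutive positions of $\pi$. Since the shells partition $[n]$, the positions then split into consecutive intervals, one per shell, and the order of these intervals defines the shell sequence to which (iii) applies. Everything else is a direct appeal to Theorem~\ref{thm:main}.
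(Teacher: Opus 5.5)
Your proposal is correct and takes the same route as the paper: Theorem~\ref{thm:main} fixes the shell sequence, and $\omega$ fixes the internal order of every two-element block (including the central pair when $n$ is even). You also spell out what the paper's three-line proof leaves implicit, namely that $\MEA_{n,\omega}$ itself satisfies (i)--(iv) and that condition (ii), together with the shell partition, yields a well-defined block decomposition of any competing traversal.
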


\begin{proof}
Theorem~\ref{thm:main} uniquely fixes the shell sequence. Fixed $\omega$ uniquely fixes the internal order of every block, including the central block when $n$ is even. Concatenation therefore yields one traversal.
\end{proof}

Without a supplied orientation, there are exactly two globally coherent traversals for $n\geq 2$, namely $\MEA_{n,+}$ and $\MEA_{n,-}$. They are mirror images.

\begin{proposition}[Mirror conjugacy]
Let $\sigma_n(i)=n+1-i$. Applying $\sigma_n$ elementwise to $\MEA_{n,+}$ gives $\MEA_{n,-}$:
\[
\MEA_{n,-}=\sigma_n\bigl(\MEA_{n,+}\bigr).
\]
\end{proposition}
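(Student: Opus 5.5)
The plan is to work block by block, since $\MEA_{n,\omega}$ is defined as the concatenation of the oriented expansions $\ord_\omega(B_{r_k})$ along the shell sequence $(r_0,\ldots,r_q)$ from Theorem~\ref{thm:main}. Because $\sigma_n$ acts elementwise, it commutes with concatenation:
\[
\sigma_n\bigl(\MEA_{n,+}\bigr)=\sigma_n\bigl(\ord_+(B_{r_0})\bigr)\mathbin{\Vert}\cdots\mathbin{\Vert}\sigma_n\bigl(\ord_+(B_{r_q})\bigr).
\]
It therefore suffices to show that $\sigma_n(\ord_+(B_r))=\ord_-(B_r)$ for every $r\in\{0,\ldots,q\}$. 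The shell sequence itself is the same for both orientations, since Theorem~\ref{thm:main} does not depend on $\omega$.

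The key identity is $a+b=n+1$. I would verify it by parity. For $n$ odd, $a=b=(n+1)/2$. For $n$ even, $a=n/2$ and $b=n/2+1$. From this identity,
\[
\sigma_n(a-r)=n+1-a+r=b+r\quad\text{and}\quad\sigma_n(b+r)=a-r.
\]
Hence
\[
\sigma_n\bigl((a-r,b+r)\bigr)=(b+r,a-r)=\ord_-(B_r),
\]
so $\sigma_n$ swaps the two elements of each shell and thereby reverses the orientation.

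The only point needing care is the singleton central shell when $n$ is odd. There $B_0=\{a\}$ and $\ord_\pm(B_0)=(a)$ after duplicates are removed. Since $\sigma_n(a)=n+1-a=a$, the image is $(a)=\ord_-(B_0)$. Duplicate removal commutes with applying the bijection $\sigma_n$, so the singleton convention creates no problem.

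Concatenating these blockwise equalities gives $\MEA_{n,-}=\sigma_n(\MEA_{n,+})$. I do not expect any real obstacle. The only places requiring attention are the parity check of $a+b=n+1$ and the singleton-shell convention.
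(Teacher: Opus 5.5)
Your proposal is correct and follows essentially the paper's argument. Reflection swaps the two elements of each shell while the shell sequence is unchanged, so each block's internal order is reversed; you make this explicit by verifying $a+b=n+1$ and handling the singleton central shell.
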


\begin{proof}
Reflection interchanges the two elements of every bilateral shell and preserves its radius. It therefore preserves the shell sequence and reverses the orientation inside each block.
\end{proof}

\section{Explicit algorithm and complexity}

The shell order can be generated with two pointers. The following pseudocode emits shell radii; each emitted radius is then expanded according to $\omega$.

\begin{quote}
\ttfamily
emit 0\\
lo $\leftarrow$ 1\\
hi $\leftarrow$ q\\
while lo $\leq$ hi do\\
\hspace*{1.5em}emit hi\\
\hspace*{1.5em}hi $\leftarrow$ hi $-1$\\
\hspace*{1.5em}if lo $\leq$ hi then\\
\hspace*{3em}emit lo\\
\hspace*{3em}lo $\leftarrow$ lo $+1$
\end{quote}

This emits
\[
0,q,1,q-1,2,q-2,\ldots.
\]
Expanding each shell produces all $n$ indices exactly once. The running time is $O(n)$ and the auxiliary storage is $O(1)$ apart from the output.

\begin{proposition}[Equivalence with the operational description]
The following operational procedure produces $\MEA_{n,\omega}$:
\begin{enumerate}[label=\arabic*.]
    \item Visit the median singleton or median pair.
    \item Visit the outermost unused bilateral pair.
    \item Visit the innermost unused bilateral pair.
    \item Alternate outermost and innermost unused pairs until none remain.
    \item Traverse every block according to the fixed orientation $\omega$.
\end{enumerate}
\end{proposition}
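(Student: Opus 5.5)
The plan is to show that the five-step operational procedure emits exactly the shell sequence $(r_0,r_1,\ldots,r_q)$ of Theorem~\ref{thm:main}, each shell expanded by $\ord_\omega$. Corollary~\ref{cor:oriented} then identifies the output with $\MEA_{n,\omega}$. Step 5 applies the same $\ord_\omega$ to every block as the definition of $\MEA_{n,\omega}$ does. So it suffices to compare the block sequences, i.e.\ the radii.

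First I translate ``outermost'' and ``innermost'' into radii. By definition $B_r=\{a-r,b+r\}$, and its elements lie at distance $d_r=r+\delta_n$ from the midpoint. This distance is strictly increasing in $r$, so among any set of unused shells the outermost is the one of maximal radius and the innermost is the one of minimal radius. Step 1 emits $B_0$, that is, $r_0=0$.

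Next I run an induction on the same invariant used in the proof of Theorem~\ref{thm:main}. After step 1 the unused radii form the interval $[1,q]$. Suppose at some stage the unused radii form a contiguous interval $[L,U]$. Taking the outermost shell removes $U$, and taking the innermost removes $L$; in both cases the remainder is again contiguous. The procedure alternates, starting with outermost (step 2) and then innermost (step 3). It therefore emits $q,1,q-1,2,\ldots$, which is exactly the endpoint alternation of the Alternating endpoint elimination corollary, and the induction gives the closed form $r_k=q-\frac{k-1}{2}$ for odd $k$ and $r_k=\frac{k}{2}$ for even $k$. This is the same bookkeeping as the two-pointer pseudocode: \texttt{hi} tracks $U$ and \texttt{lo} tracks $L$.

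The only delicate point is the termination and degenerate cases, which I expect to be the main (mild) obstacle. When $q=0$, steps 2--4 emit nothing. When the interval shrinks to a single radius, the outermost and innermost shells coincide, so the last step is unambiguous whichever branch is next. I also have to make sure the procedure stops after all $q+1$ shells, so that each index of $[n]$ appears exactly once; this holds because the shells partition $[n]$. With these cases checked, the two block sequences agree term by term, and the proposition follows.
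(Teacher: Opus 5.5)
Your proposal is correct and follows essentially the same route as the paper, whose proof simply notes that after the center the radii are selected in the order $q,1,q-1,2,q-2,\ldots$, i.e.\ outermost, innermost, next outermost, next innermost. You additionally spell out the details the paper leaves implicit: outermost and innermost correspond to maximal and minimal radius because $d_r=r+\delta_n$ is increasing in $r$, the unused radii stay contiguous, and the degenerate cases are checked.
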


\begin{proof}
After the center, the shell radii are selected in the order
\[
q,1,q-1,2,q-2,3,\ldots,
\]
which is exactly outermost, innermost, next outermost, next innermost, and so on.
\end{proof}

\section{Examples}

\begin{example}[Odd cardinality: $n=7$]
Here $a=b=4$ and $q=3$. The shells are
\[
B_0=\{4\},\quad
B_1=\{3,5\},\quad
B_2=\{2,6\},\quad
B_3=\{1,7\}.
\]
The unique radius sequence is
\[
0,3,1,2.
\]
Therefore
\[
\MEA_{7,+}=(4,1,7,3,5,2,6)
\]
and
\[
\MEA_{7,-}=(4,7,1,5,3,6,2).
\]
\end{example}

\begin{example}[Odd cardinality: $n=9$]
The radius sequence is
\[
0,4,1,3,2.
\]
Hence
\[
\MEA_{9,+}=(5,1,9,4,6,2,8,3,7).
\]
\end{example}

\begin{example}[Even cardinality: $n=10$]
Here $a=5$, $b=6$, and $q=4$. The shells are
\[
\begin{aligned}
B_0&=\{5,6\}, & B_1&=\{4,7\},\\
B_2&=\{3,8\}, & B_3&=\{2,9\},\\
B_4&=\{1,10\}.&&
\end{aligned}
\]
The unique radius sequence is
\[
0,4,1,3,2.
\]
Therefore
\[
\MEA_{10,+}=(5,6,1,10,4,7,2,9,3,8)
\]
and
\[
\MEA_{10,-}=(6,5,10,1,7,4,9,2,8,3).
\]
\end{example}

\begin{table}[ht]
\centering
\caption{Selected positive-orientation MEA traversals.}
\label{tab:examples}
\begin{tabular}{c@{\qquad}l}
\toprule
$n$ & $\MEA_{n,+}$ \\
\midrule
1 & $1$ \\
2 & $1,2$ \\
3 & $2,1,3$ \\
4 & $2,3,1,4$ \\
5 & $3,1,5,2,4$ \\
6 & $3,4,1,6,2,5$ \\
7 & $4,1,7,3,5,2,6$ \\
8 & $4,5,1,8,3,6,2,7$ \\
9 & $5,1,9,4,6,2,8,3,7$ \\
10 & $5,6,1,10,4,7,2,9,3,8$ \\
\bottomrule
\end{tabular}
\end{table}

\section{Scope and nonclaims}

The theorem is deliberately narrow. Several distinctions are necessary for correct use.

\paragraph{Shell-level, not raw-index, optimization.}
The selected object at each step is a bilateral shell, and contrast is measured between shell radii. A greedy rule based on the last raw index gives a different traversal. For example, in the $n=10$ positive traversal, after the exterior block $(1,10)$ the last raw index is $10$; a raw-index farthest-neighbor rule would next prefer $2$, whereas MEA next selects the shell $\{4,7\}$ because its normalized radius $1$ is farthest from the previous shell radius $4$.

\paragraph{Local greedy characterization, not an arbitrary global optimum.}
Theorem~\ref{thm:main} proves uniqueness under a specified local maximization rule. It does not claim that MEA uniquely maximizes every possible cumulative path functional on permutations of $[n]$.

\paragraph{Orientation is a boundary condition.}
Maximal radial contrast determines the shell sequence but does not derive $\omega$. The operative direction of an application supplies the global chirality. Given $\omega$, Corollary~\ref{cor:oriented} gives uniqueness; without it, the two mirror traversals remain.

\paragraph{No semantic conclusion follows from the combinatorics alone.}
MEA determines an engagement order on a finite ordered domain. Any linguistic, semiotic, computational, or physical interpretation of that order requires separate domain-specific premises and tests.

\section{Extension to arbitrary finite ordered sets}

Every finite linearly ordered set $X=\{x_1<\cdots<x_n\}$ is order-isomorphic to $[n]$. Transporting $\MEA_{n,\omega}$ through the isomorphism $i\mapsto x_i$ gives an MEA traversal of $X$. Consequently, the construction depends only on finite linear order, cardinality, and the chosen global orientation; it does not depend on numerical labels.

\section{Conclusion}

MEA admits a parity-unified characterization. Fold a finite ordered set into bilateral shells, normalize the central shell to radius $0$, begin there, and repeatedly engage the unvisited shell having greatest radial contrast with the shell most recently engaged. The remaining radii stay contiguous, while the current radius stays adjacent to that interval; these two self-preserving invariants force a strict unique maximizer at every step. The resulting shell sequence is
\[
0,q,1,q-1,2,q-2,\ldots.
\]
Parity changes only the size of the central block. A fixed domain orientation then orders every block consistently and yields a unique index traversal. Before orientation is supplied, the only alternatives are the two mirror conjugates.

The central mathematical statement is therefore:

\begin{quote}
\emph{Given a finite linearly ordered domain with fixed orientation $\omega$, $\MEA_{\omega}$ is the unique bilateral-shell traversal that begins at the center, repeatedly engages the unvisited shell of greatest normalized radial contrast from the shell most recently engaged, and traverses every block consistently in the direction specified by $\omega$.}
\end{quote}

\end{document}